\numberwithin{equation}{section}
\newtheorem{theorem}[equation]{Theorem}
\newtheorem{definition}[equation]{Definition}
\newtheorem{proposition}[equation]{Proposition}
\newtheorem{lemma}[equation]{Lemma}
\begin{document}
\title{Multi-Frequency Oscillation Estimates Arising in Pointwise Ergodic Theory}

\author{Ben Krause}
\address{
Department of Mathematics,
University of Bristol \\
Woodland Rd, Bristol BS8 1UG}
\email{ben.krause@bristol.ac.uk}
\date{\today}

\maketitle

\begin{abstract}
We prove essentially optimal $L^p(\mathbb{R})$-estimates for variational variants of the maximal Fourier multiplier operators considered by Bourgain in his work on pointwise convergence of polynomial ergodic averages. As a corollary of our methods, we are able to quickly extend a result of Bourgain, namely the pointwise convergence of 
ergodic averages of integer parts of real-variables polynomials, to a broader class of functions, previously considered in a wide range of contexts by Boshernitzan-Jones-Wierdl. Namely, the following averages converge almost everywhere
\[ \frac{1}{N} \sum_{n \leq N} T^{\lfloor P(n) \rfloor} f, \; \; \; f \in L^p(X,\mu), \ P \in \mathbb{R}[\cdot], \]
for any $\sigma$-finite measure space equipped with a measure-preserving transformation, $T:X \to X$, whenever $1 < p  \leq \infty$ if $P$ is linear, and $4/3 < p \leq \infty$ otherwise.
\end{abstract}

\section{Introduction}
The topic of this paper is the oscillation of so-called ``multi-frequency" operators arising in pointwise ergodic theory. Here is the set-up:

Let $\Theta \subset \mathbb{R}$ be a finite set of frequencies, normalized so that 
\[ 2 \geq \min_{\theta \neq \theta' \in \Theta} |\theta - \theta'| > 1,\]
and for each $k$, consider a $2^{-k}$ neighborhood of $\Theta$,
\[ R_k := R_{k,\Theta} := \Theta + (-2^{-k},2^{-k}).\]
In his celebrated work on pointwise convergence of ergodic averages along polynomial orbits \cite{B2}, Bourgain established the following maximal estimate involving Fourier projections onto $\{ R_k\}$; we will use the notation
\[ \Xi_k f := \Xi_{k,\Lambda} f := \big( \mathbf{1}_{R_k} \hat{f} \big)^{\vee}
\]
and below set $N := |\Theta|$.

\begin{theorem}\label{t:max}
There exists an absolute $0 < C< \infty$ so that following estimate holds:
\[ \| \sup_k |\Xi_k f| \|_{L^2(\mathbb{R})} \leq C \log^2 N \cdot \|f \|_{L^2(\mathbb{R})}.
\]
\end{theorem}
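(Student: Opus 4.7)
The plan is a Rademacher--Menshov orthogonality argument applied to the telescoping differences
\[\Delta_j f := \Xi_j f - \Xi_{j+1} f, \qquad j \geq 0,\]
whose Fourier transforms are supported on the shrinking annular sets $R_j \setminus R_{j+1}$. Since $R_0 \supset R_1 \supset \cdots$, these annular sets are pairwise disjoint across different $j$, so the family $\{\Delta_j f\}$ is orthogonal in $L^2(\mathbb{R})$ and $\sum_j \|\Delta_j f\|_{L^2}^2 \leq \|f\|_{L^2}^2$ by Plancherel. Moreover, the separation hypothesis $\min_{\theta \neq \theta'} |\theta - \theta'| > 1$ guarantees that for $j \geq 1$ the intervals $\theta + (-2^{-j}, 2^{-j})$ are pairwise disjoint as $\theta$ varies over $\Lambda$, so that $\Xi_j$ further decomposes as an orthogonal sum $\Xi_j = \sum_{\theta \in \Lambda} P_j^\theta$ of single-frequency projections. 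A preliminary smoothing step replacing $\mathbf{1}_{R_k}$ by a Schwartz bump adapted to $R_k$ is convenient to make the decomposition rigorous; the smoothing error is controlled by Hardy--Littlewood maximal majorants.

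Telescoping gives $\Xi_k f = \Xi_0 f - \sum_{j=0}^{k-1} \Delta_j f$, and since $\|\Xi_0 f\|_{L^2} \leq \|f\|_{L^2}$ trivially, the bound reduces to an $L^2$ estimate for the maximal partial sum $\sup_K |\sum_{j=0}^K \Delta_j f|$. The classical Rademacher--Menshov inequality for orthogonal systems supplies
\[ \Big\| \sup_{K \leq J} \Big|\sum_{j=0}^K \Delta_j f \Big| \Big\|_{L^2} \leq C (\log J) \Big( \sum_j \|\Delta_j f\|_{L^2}^2 \Big)^{1/2} \leq C(\log J) \|f\|_{L^2}, \]
producing one factor of $\log N$ once the effective range of scales $J$ can be taken to be $N^{O(1)}$.

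The main obstacle, and the source of the second $\log N$ factor, is the scale truncation. My plan is to split at the critical scale $2^{-k} \sim 1/N$. For $k \gtrsim \log N$ the frequency windows around distinct $\theta \in \Lambda$ are extremely well separated on every scale, so at each fixed $k$ the decomposition $\Xi_k f = \sum_\theta P_k^\theta f$ is orthogonal; after smoothing, each maximal single-frequency operator $\sup_k |P_k^\theta f|$ is pointwise dominated by a modulated Hardy--Littlewood maximal function, which together with orthogonality-in-$\theta$ controls this regime. For the coarse range $k \lesssim \log N$ the effective number of scales is itself $\sim \log N$, so Rademacher--Menshov directly delivers one $\log N$ factor; the disjointness of frequency windows across the $N$ frequencies then yields the second $\log N$ via either a nested Rademacher--Menshov, a further dyadic decomposition of $\Lambda$, or a Schur-type summation. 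Splicing the two regimes produces the claimed $\log^2 N$ bound.
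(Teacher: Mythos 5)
The paper does not prove Theorem \ref{t:max}: it is quoted verbatim from Bourgain's work \cite{B2} (and is explicitly used as a black box, e.g.\ in the $L^2$ case of Lemma \ref{l:max}, which reads ``is just Theorem \ref{t:max}''). So the relevant comparison is with Bourgain's entropy argument in \cite[\S4]{B2}, not with anything in this paper, and your proposal is quite far from it.

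There are two genuine gaps. The first, and more serious, is in your fine-scale regime. You write that each $\sup_k |P_k^\theta f|$ is dominated by a modulated Hardy--Littlewood maximal function, ``which together with orthogonality-in-$\theta$ controls this regime.'' This is not a valid step: the supremum over $k$ sits \emph{outside} the sum over $\theta$, so orthogonality of the band-limited pieces $f_\theta$ buys you nothing directly. The only thing the triangle inequality gives is
\[
\Big\| \sup_k \Big| \sum_\theta P_k^\theta f_\theta \Big| \Big\|_{L^2} \leq \sum_\theta \| M_{\text{HL}} f_\theta \|_{L^2} \lesssim \sum_\theta \|f_\theta\|_{L^2} \leq \sqrt{N}\,\|f\|_{L^2},
\]
which is catastrophically worse than $\log^2 N$. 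The whole difficulty of the multi-frequency maximal theorem is precisely that the common stopping scale $k$ couples the frequencies, and Bourgain resolves this with a metric-entropy/pigeonhole argument that you have not supplied a substitute for. Simply invoking ``orthogonality-in-$\theta$'' elides the actual content of the theorem.

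The second gap concerns the scale count in the coarse regime. Rademacher--Menshov applied to $J$ orthogonal increments yields a factor $\log J$, so to extract $\log N$ you need $J = N^{O(1)}$. You assert that ``the effective number of scales is $\sim \log N$'' for $k \lesssim \log N$; apart from the arithmetic mismatch (that would give $\log\log N$, not $\log N$), the effective range of $k$ is not determined by $N$ alone. The hypothesis only constrains $\min_{\theta\neq\theta'}|\theta-\theta'|$; the diameter of $\Lambda$ is unconstrained, so $R_k$ can vary nontrivially over $\gg N^{O(1)}$ negative scales $k$. To truncate legitimately you must exploit that the \emph{topology} of $R_k$ (the number of connected components) changes at most $O(N^2)$ times, handling a small exceptional neighborhood of those merger scales separately and rescaling in between---exactly the device the paper deploys for the jump-counting analogue via the set $E$ in \eqref{e:E} in the proof of Proposition \ref{p:key}. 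Your proposal never addresses this, and without it the coarse-scale RM bound is unbounded in terms of $N$. Finally, your stated fallback options for the second $\log N$ (``nested Rademacher--Menshov, a further dyadic decomposition of $\Lambda$, or a Schur-type summation'') are placeholders rather than arguments; since the bound is known to be essentially sharp \cite{BKO}, there is no slack to spend, and each of these would need to be carried out precisely.
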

Slightly simpler variants of this estimate (involving smooth cut-offs) have had a wide range of applications in questions involving pointwise convergence \cite{B00,B3,D1,L}, as well as some more subtle ones \cite{D1,DOP}. And, Theorem \ref{t:max} is known to be essentially sharp \cite{BKO}.

To describe our modifications, we recall two standard ways to measure oscillation, namely via \emph{jump-counting} and \emph{$r$-variation}, classically used in probability theory, and imported to the harmonic-analytic setting by Bourgain in \cite{B2}. We begin with the relevant definitions.

\begin{definition}
    For a sequence of scalars $\{ a_k \} \subset \mathbb{C}$ define the (greedy) jump-counting function at altitude $\lambda > 0$,
    \[ N_\lambda(a_k) := \sup\{ M : \text{There exists } k_0 < k_1 < \dots < k_M : |a_{k_n} - a_{k_{n-1}}| > \lambda \text{ for each } 1 \leq n \leq M \}.\]
And for each $0 < r < \infty$, define the \emph{$r$-variation} to be
\[ \mathcal{V}^r(a_k) := \sup \big( \sum_n |a_{k_n} - a_{k_{n-1}}|^r \big)^{1/r} \]
where the supremum runs over all finite increasing subsequences; we define
\[ \mathcal{V}^{\infty}(a_k) := \sup_{n \neq m} |a_n - a_m|\]
to be the \emph{diameter} of the sequence.
\end{definition}
These two statistics quantify convergence in that $\{ a_k \}$ converge iff
\[ N_\lambda(a_k) < \infty\]
for each $\lambda > 0$, and via the inequality
\[ \sup_{\lambda > 0} \, \lambda N_\lambda(a_k)^{1/r} \leq \mathcal{V}^r(a_k), \ 0 < r < \infty, \]
having finite $r$-variation implies that the sequence $\{ a_k\}$ converges.

For a sequence of functions, $\{ f_n(x) \} : X \to \mathbf{C}$, we define the functions
\[ N_\lambda(f_n)(x) := N_\lambda(f_n(x)), \; \; \; \mathcal{V}^r(f_n)(x) := \mathcal{V}^r(f_n(x)); \]
to see their utility in questions involving pointwise convergence, observe that a norm estimate of the form e.g.\
\[  \| \mathcal{V}^r(f_n) \|_{L^p(X,\mu)} \leq C\]
implies that $\mathcal{V}^r(f_n) <  \infty$ $\mu$-almost everywhere, and thus $\{ f_n\}$ converge almost everywhere as well. This approach was crucially used by Bourgain in \cite{B2}, which also allowed him to address the $L^{\infty}$-formulation of our main ergodic theoretic result, Theorem \ref{t:main0} below.

The relevant estimates for $\lambda N_{\lambda}^{1/2}, \mathcal{V}^r, \ r > 2$ derive, in many cases, from the following inequality, classically used as a convergence result in martingale theory \cite{LE}, with the restriction to $r > 2$ deriving from the law of the iterated logarithm; see \cite{JSW} for a discussion, and \cite{G+} or \cite{O+} for more exotic examples. Below, we let
\[ \mathbb{E}_k f := \sum_{|I| = 2^k \text{ dyadic}} \big( \frac{1}{|I|} \int_I f(t) \ dt \big) \mathbf{1}_I \]
denote the dyadic conditional expectation operators.

\begin{proposition} [L\'{e}pingle's Inequality, Special Case] \label{p-LEP}
There exists some absolute $0 < C < \infty$ so that the following estimate holds:
\[ \sup_{\lambda > 0} \| \lambda N_{\lambda}(\mathbb{E}_k f)^{1/2} \|_{L^2(\mathbb{R})} \leq C \|f\|_{L^2(\mathbb{R})}. \]
And, for each $r > 2$,
\[ \| \mathcal{V}^r(\mathbb{E}_k f) \|_{L^2(\mathbb{R})} \leq C  \frac{r}{r-2} \cdot \| f \|_{L^2(\mathbb{R})}. \]
\end{proposition}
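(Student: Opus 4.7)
The plan is to derive both estimates from the $L^2$-orthogonality of the dyadic martingale differences $D_k f := \mathbb{E}_{k-1} f - \mathbb{E}_k f$ via the stopping-time method.

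I first attack the jump inequality. Fix $\lambda > 0$ and, relative to the dyadic filtration, inductively define greedy stopping times $\tau_0 < \tau_1 < \cdots$ at which the successive $\lambda$-jumps of $(\mathbb{E}_k f)$ occur, with the convention that $\tau_j = +\infty$ once no further $\lambda$-jump exists. By optional stopping, $(\mathbb{E}_{\tau_j} f)_j$ remains a martingale with respect to the stopped filtration $(\mathcal{F}_{\tau_j})_j$, so its increments $\Delta_j^\star := \mathbb{E}_{\tau_j} f - \mathbb{E}_{\tau_{j-1}} f$ are pairwise orthogonal in $L^2(\mathbb{R})$. Since $|\Delta_j^\star(x)| > \lambda$ on $\{\tau_j(x) < \infty\}$ and the number of $j \geq 1$ with $\tau_j(x) < \infty$ is exactly $N_\lambda(\mathbb{E}_k f)(x)$, the pointwise bound
\[
\lambda^2 \, N_\lambda(\mathbb{E}_k f)(x) \leq \sum_{j \geq 1} |\Delta_j^\star(x)|^2
\]
integrates, via Bessel's inequality, to $\lambda^2 \|N_\lambda(\mathbb{E}_k f)\|_{L^1(\mathbb{R})} \leq \sum_j \|\Delta_j^\star\|_{L^2(\mathbb{R})}^2 \leq \|f\|_{L^2(\mathbb{R})}^2$, which is equivalent to the first claimed inequality.

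For the $r$-variation with $r > 2$, I would use a Pisier--Xu-style dyadic decomposition. Split $\mathcal{V}^r(\mathbb{E}_k f) \lesssim \mathcal{V}^r_{\mathrm{long}} + \mathcal{V}^r_{\mathrm{short}}$, where $\mathcal{V}^r_{\mathrm{long}}$ measures the $r$-variation of the sparse subsequence $(\mathbb{E}_{2^j} f)_j$ and $\mathcal{V}^r_{\mathrm{short}}$ collects the $r$-variations within each dyadic block $\{k : 2^j \leq k < 2^{j+1}\}$. The long part is controlled by summing the jump inequality over dyadic jump scales $\lambda = 2^\ell$: grouping jumps by size yields a geometric series in $\ell$ of ratio $2^{-(r-2)}$, whose total mass is $O(1/(r-2))$, accounting for the claimed factor $r/(r-2)$. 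The short part is bounded in $L^2$ by the $\ell^2$ square function of the dyadic martingale differences within each block, by orthogonality, together with Doob's $L^2$ maximal inequality applied inside each block.

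The main obstacle lies in extracting the sharp constant in the $r$-variation estimate: a naive summation of the jump inequality at scales $2^\ell$ gives a geometric series whose ratio $2^{-(r-2)}$ degenerates to $1$ as $r \to 2^+$, and obtaining the correct $r/(r-2)$ blow-up requires carefully balancing this sum against the shrinking support $\{x : \mathcal{V}^\infty(\mathbb{E}_k f)(x) > 2^\ell\}$, whose measure is controlled by Doob's inequality $\|\sup_k |\mathbb{E}_k f|\|_{L^2(\mathbb{R})} \leq 2 \|f\|_{L^2(\mathbb{R})}$, and the $L^2$-boundedness of the subsampled square functions $(\sum_j |\Delta_j^\star|^2)^{1/2}$ produced by the jump argument. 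Modulo this standard bookkeeping, both estimates are well-known special cases of L\'epingle's martingale inequality \cite{LE}, and the argument sketched here is essentially the one given in \cite{JSW}.
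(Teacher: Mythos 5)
The paper does not actually prove Proposition~\ref{p-LEP}; it is stated as a classical fact, attributed to L\'epingle \cite{LE}, with \cite{JSW} cited for a modern discussion. So there is no in-paper argument to compare against, and your sketch must be judged on its own terms.

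Your stopping-time argument for the jump inequality is the standard one and is essentially sound, but it contains one imprecision worth flagging: the claim that the number of $j$ with $\tau_j(x) < \infty$ ``is exactly'' $N_\lambda(\mathbb{E}_k f)(x)$ is not correct. The paper's $N_\lambda$ is a supremum over \emph{all} increasing subsequences with $\lambda$-jumps, and the greedy stopping rule can undercount this supremum (the greedy iterate can ``overshoot'' and land at a value from which subsequent $\lambda$-jumps in the original subsequence are no longer visible). The fix is routine: the greedy count at altitude $\lambda$ dominates the supremum count at altitude $2\lambda$ (or $3\lambda$, depending on the exact convention), and since one takes $\sup_\lambda \lambda N_\lambda^{1/2}$, rescaling $\lambda$ by a constant only costs an absolute constant in the final bound. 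With that repair, the orthogonality of stopped-martingale increments and Bessel give $\lambda^2 \int N_\lambda \leq \|f\|_{L^2}^2$ as you say. One should also note that the dyadic $\mathbb{E}_k f$ defined in the paper form a \emph{reverse} martingale in $k$ (the $\sigma$-algebras coarsen as $k$ increases), so the direction of the filtration and of optional stopping must be taken accordingly; this is cosmetic but should be stated.

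For the $r$-variation estimate your long/short decomposition is again the classical route (Pisier--Xu, Lewko--Lewko, \cite{JSW}), but as written it does not yet establish the stated bound: you defer the extraction of the sharp $\tfrac{r}{r-2}$ constant to ``standard bookkeeping,'' and the informal geometric-series argument you outline, with ratio $2^{-(r-2)}$ summing to $O\big(\tfrac{1}{r-2}\big)$, is the right heuristic but needs the kind of careful level-set splitting that the paper itself packages, in a slightly different form, as Lemma~\ref{l:interp} (jump bound plus maximal bound implies $\mathcal{V}^r$ bound with constant $\sim(\tfrac{r}{r-2})^{1/2+1/p}$, which at $p=2$ gives exactly $\tfrac{r}{r-2}$). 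In short: the jump-counting half is correct modulo the greedy-versus-supremum repair and the reverse-martingale bookkeeping, while the $r$-variation half remains a sketch whose missing details are precisely the ones L\'epingle's theorem is famous for; since the paper simply cites this result, your sketch is adequate as a pointer to the literature but is not a self-contained proof.
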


The majority of the analytic part of our argument will concern a detailed analysis of the oscillatory nature of the $\{ \Xi_k \}$ at the $L^2(\mathbb{R})$-level, and moreover their smooth analogues,
\begin{align}\label{e:Phik}
    \widehat{\Phi_k f}(\beta) := \sum_{\theta \in \Theta} \varphi(2^k(\beta - \theta)) \hat{f}(\beta), \; \; \; \varphi \in \mathcal{C}_c^{\infty}(-1/2,1/2), \ \varphi(0) = 1
\end{align}
in $L^p(\mathbb{R})$. 
While the multi-frequency nature of the $\{ \Xi_k \}$ preclude a direct comparison with the martingale formulations, which gives a privileged role to the $0$-frequency (expectation), the projection structure of the $\{ \Xi_k \}$, namely
\[ \Xi_k \Xi_l \equiv \Xi_l, \; \; \; k \leq l\]
acts as a substitute; and similarly for the $\Phi_k$.
We summarize our main analytic results.

\begin{theorem}\label{t:L2}
There exists an absolute constant, $0 < C < \infty$, so that the following estimates holds:
for each $r > 2$,
\[ \| \mathcal{V}^r(\Xi_k f) \|_{L^2(\mathbb{R})} + \| \mathcal{V}^r(\Phi_k f : k \geq 1) \|_{L^2(\mathbb{R})} \leq C \log^2 N (\frac{r}{r-2})^2 \| f \|_{L^2(\mathbb{R})}. \]
And, for each $1 < p < \infty$ and each $\epsilon > 0$, there exists an absolute constant $0 < C_{\epsilon,p} < \infty$ so that
\[ \| \mathcal{V}^r(\Phi_k f : k \geq 1) \|_{L^p(\mathbb{R})} \leq C_{\epsilon,p} (\frac{r}{r-2})^2 N^{|1/p-1/2| + \epsilon} \| f \|_{L^p(\mathbb{R})}.\]
\end{theorem}

By applying the Multi-Frequency Calder\'{o}n-Zygmund Decomposition of Nazarov-Oberlin-Thiele \cite{NOT}, we are also able to address the endpoint.
\begin{theorem}\label{t:L1}
For each $\epsilon > 0$, there exists an absolute constant $0 < C_{\epsilon} < \infty$, an a single absolute constant, $0 < C < \infty$, so that
\[ \| \mathcal{V}^r(\Phi_k f : k \geq 1) \|_{L^{1,\infty}(\mathbb{R})} \leq \big( C (\frac{r}{r-2})^4 \log^4 N \cdot N^{1/2} + C_\epsilon N^{1/2+\epsilon} \big) \| f \|_{L^1(\mathbb{R})}.\]
\end{theorem}


Similar, if weaker, estimates were established in \cite{NOT,BKold} using interpolation, as was an analogue of Theorem \ref{t:L2} 
in \cite{DOP}; both estimates at the $L^2(\mathbb{R})$ level were on the order of $N^{c_r}$, and $N^{1/2 + c_r}$ at $L^{1,\infty}(\mathbb{R})$ level, with intermediate estimates established through interpolation. One feature of these arguments is that we establish Theorem \ref{t:L2} independently of our endpoint result.

With these analytic preliminaries in hand, the second half of our paper is devoted to Theorem \ref{t:main0}:

\begin{theorem}\label{t:main0}
For any $\sigma$-finite measure space equipped with a measure-preserving transformation, $T:X \to X$ and any $P \in \mathbb{R}[\cdot]$, the following ergodic averages converge almost everywhere whenever $f \in L^p(X), \ 1 < p \leq \infty$ if $P$ is linear, and $4/3 < p \leq \infty$ otherwise:
\[ \frac{1}{N} \sum_{n \leq N} T^{\lfloor P(n) \rfloor} f.\]
\end{theorem}

The main feature of our proof is that it is completely agnostic to any arithmetic information concerning $P$, save for the presence of at least one irrational coefficient; but this is no loss of generality, as the converse case was (first) addressed by Bourgain in \cite{B0}.

After completing this paper, we were informed by M. Wierdl that Theorem \ref{t:main0} can be shown to hold for all polynomials in the full expected range, $1 < p \leq \infty$ for Lebesgue spaces, i.e.\ complete probability spaces that are isomorphic to the ordinary Lebesgue measure space $([0,1], dx)$, by applying more involved, higher-dimensional analysis, namely the main result of \cite{MST}, see also \cite{MT}; we provide the argument in an appendix below, which was generously shared to us by him.

\subsection{Acknowledgements}
The author would like to thank Jan Fornal and Oleksiy Klurman for their help with Lemma \ref{l:gauss}, and M\'{a}t\'{e} Wierdl for making him aware of previous work on the matter.

\section{Preliminaries}\label{ss:not}
\subsection{Notation}
We let
\[ M_{\text{HL}}f(x) := \sup_{r > 0} \frac{1}{2r} \int_{|t| \leq r} |f(x-t)| \ dt  \]
denote the Hardy-Littlewood Maximal Function.

We introduce three measurements of oscillation for Hilbert-space valued Functions,
\[ \vec{f} := \{ f_\theta : \theta \in \Theta\}: \]
Regarding $\varphi \in \mathcal{C}_c^{\infty}(-1/2,1/2)$ as arbitrary but fixed, subject to the constraint that
\[ \sum_{0 \leq j \leq 10} \| \partial^j \varphi \|_{L^\infty(\mathbb{R})} \leq A_0,\]
we define the jump counting function, $\vec{N}_\lambda(\vec{f}) := \vec{N}_{\lambda,\varphi}(\vec{f})$ to be the supremum over all $M$ so that 
\begin{align}\label{e:vecJump}
    \text{there exists } k_0 < k_1 < \dots < k_M : \| \big((\varphi(2^{k_i} \cdot) - \varphi(2^{k_{i-1}} \cdot)) \widehat{f_\theta}\big)^{\vee} \|_{\ell^2(\Theta)} > \lambda;
\end{align}
we define
\begin{align}\label{e:vecVr}
    \mathcal{V}^r_{\Theta}(\vec{f}) := \mathcal{V}^r_{\Theta,\varphi}(\vec{f}) := 
    \sup \big( \sum_{i} \| \big((\varphi(2^{k_i} \cdot) - \varphi(2^{k_{i-1}} \cdot)) \widehat{f_\theta}\big)^{\vee} \|_{\ell^2(\Theta)}^r \big)^{1/r}
\end{align}
where the supremum runs over all finite increasing subsequences; 
and, we define
\begin{align}\label{e:vecMax}
    \mathcal{F}_{\Theta;K}(\vec{f})^2 := \mathcal{F}_{\Theta,\varphi;K}(\vec{f}) := \sum_{\theta \in \Theta} \sup_{k \geq K} |(\varphi(2^{k_i} \cdot) \widehat{f_\theta})^{\vee}|^2.
\end{align}
By \cite{KZK} and convexity, respectively the Fefferman-Stein inequalities, for each $1 < p < \infty$ there exists an absolute constant $0 < C_p < \infty$ so that we may estimate
\begin{align}
   \frac{r-2}{r} \| \mathcal{V}^r_{\Theta}(\vec{f}) \|_{L^p(\mathbb{R})} + \sup_K \| \mathcal{F}_{\Theta;K}(\vec{f}) \|_{L^p(\mathbb{R})}  \leq C_p A_0 \| \vec{f} \|_{L^p(\ell^2(\Theta))}, \; \; \; 1 < p < \infty.
\end{align}



\subsection{Asymptotic Notation}\label{sss:O}
We will make use of the modified Vinogradov notation. We use $X \lesssim Y$ or $Y \gtrsim X$ to denote
the estimate $X \leq CY$ for an absolute constant $C$ and $X, Y \geq 0.$  If we need $C$ to depend on a
parameter, we shall indicate this by subscripts, thus for instance $X \lesssim_p Y$ denotes the estimate $X \leq C_p Y$ for some $C_p$ depending on $p$. We use $X \approx Y$ as shorthand for $Y \lesssim X \lesssim Y$. We use the notation $X \ll Y$ or $Y \gg X$ to denote that the implicit constant in the $\lesssim$ notation is extremely large, and analogously $X \ll_p Y$ and $Y \gg_p X$.

We also make use of big-O notation and little-O: we let $O(Y)$  denote a quantity that is $\lesssim Y$ , and similarly
$O_p(Y )$ will denote a quantity that is $\lesssim_p Y$; we let $o_{t \to a}(Y)$
denote a quantity whose quotient with $Y$ tends to zero as $t \to a$ (possibly $\infty$), and
$o_{t \to a;p}(Y)$
denote a quantity whose quotient with $Y$ tends to zero as $t \to a$ at a rate depending on $p$.

\subsection{Elementary Oscillation Inequalities}
Given a sequence $\{ a_k \} \subset \mathbb{C}$, we begin by noting the trivial estimate, valid for each $r \geq 2$:
\[ \mathcal{V}^r(a_k) \leq  \mathcal{V}^2(a_k) \lesssim (\sum_k |a_k|^2)^{1/2}.\]

We next recall the following elementary inequality, first established in \cite{LL}, see also \cite{MT}, \cite[\S 4]{BOOK}
\begin{align}\label{e:V2} \mathcal{V}^2(a_k : k \leq 2^n) \leq 2 \sum_{m \leq n} \Big( \sum_{s < 2^{n-m}} |a_{s 2^m} + a_{(s+1)2^m}|^2 \Big)^{1/2}.
\end{align}

We will also make use of the following decomposition lemma from \cite{JSW}:
Given a disjoint partition, $[-M,M] = \bigcup_n I_n$, where $I_n = [a_n,b_n]$, we may bound
\begin{align}\label{e:split} \mathcal{V}^r(a_k) \lesssim \big( \sum_n \mathcal{V}^r(a_k : I_n)^r \big)^{1/r} + \mathcal{V}^r(b_n : n).
\end{align}

\section{The Proof of Theorem \ref{t:L2}}
The focus of this section is Theorem \ref{t:L2}, with the bulk of the work being devoted to the $L^2(\mathbb{R})$ theory. By monotone convergence, we can and will assume that all times are bounded by some absolute threshold,
\[ |k| \leq K_1,\]
provided our estimates are independent of $K_1$. Since
\begin{align}
    \sup_{k \geq 1} \| \Phi_k f \|_{L^p(\mathbb{R})} \lesssim_\epsilon N^{|1/p-1/2|+\epsilon} \| f \|_{L^p(\mathbb{R})}, \; \; \; 1 < p < \infty
\end{align}
by \cite{CRS}, and we of course have the trivial bound
\begin{align}
    \sup_{k \geq 1} \| \Phi_k f \|_{L^2(\mathbb{R})} + \sup_{k} \| \Xi_k f \|_{L^2(\mathbb{R})} \leq \| f \|_{L^2(\mathbb{R})},
\end{align}
in what follows we can and will assume that all scales $k$ considered satisfy the bounds
\[ |k| \geq 100 \log N;\]
so, below we will implicitly restrict to the regime of scales
\begin{align}
    K_0 := K_0(N) := 100 \log N \leq |k| \leq K_1.
\end{align}

Our first order of business is to reduce Theorem \ref{t:L2} to the smooth formulation. 

\begin{proof}[Proof of Theorem \ref{t:L2}, Reduction to Smooth Cut-Offs]
Assume the bound
\begin{align}
    \| \mathcal{V}^r(\Phi_k f : k \geq 1) \|_{L^2(\mathbb{R})} \lesssim (\frac{r}{r-2})^2 \log^2 N \| f \|_{L^2(\mathbb{R})}; 
\end{align}
by a square function argument, crucially using the bound
\[ \sup_\beta \sum_{k \geq 1} |\widehat{\Xi_k}(\beta) - \widehat{\Phi_k}(\beta)|^2 \lesssim 1,\]
after conflating the operators with their symbols, we deduce an analogous bound
\begin{align}\label{e:Xiscaled}
    \| \mathcal{V}^r(\Xi_k f : k \geq 1) \|_{L^2(\mathbb{R})} \lesssim (\frac{r}{r-2})^2 \log^2 N \| f \|_{L^2(\mathbb{R})}; 
\end{align}

It remains to address the bound
\begin{align}\label{e:Xiscaled}
    \| \mathcal{V}^r(\Xi_k f : k \leq 0) \|_{L^2(\mathbb{R})} \lesssim (\frac{r}{r-2})^2 \log^2 N \| f \|_{L^2(\mathbb{R})}; 
\end{align}

By applying \eqref{e:V2} pointwise to the sequence $\{ \Xi_kf(x) \}$ and using orthogonality in $L^2(\mathbb{R})$, we may also efficiently address the $r$-variation of $\{ \Xi_k f\}$ restricted to finitely many scales:
\begin{align}\label{e:V2} 
\| \mathcal{V}^2(\Xi_k f : k \in E)\|_{L^2(\mathbb{R})} \lesssim \log |E| \cdot \|  f \|_{L^2(\mathbb{R})}.
\end{align}

So, let 
\begin{align}\label{e:E} E := \{ |k| \leq M : \text{ there exists } \theta \neq \theta' \in \Lambda : 2^{-k - N^{10}} \leq |\theta - \theta'| \leq 2^{-k + N^{10}} \} \end{align}
which has size $|E| \lesssim N^{15}$; we estimate the contribution to the jump counting function coming from times in $E$ using \eqref{e:V2}.

Now, let $I_n := [a_n,b_n]$ be such that for each $k \in I_n$, $R_k$ has $n$ connected components, and $[a_n,b_n] \cap E = \emptyset$. 

Note that since $I_n \cap E = \emptyset$, if for $k \in I_n$ we express $R_k$ as a disjoint union of intervals,
\[ R_k = \bigcup_{l \leq n} (c_l-2^{-k},d_l+2^{-k}) = \bigcup_{l \leq n} \big( \frac{c_l+d_l}{2} + (-2^{-k} - \frac{d_l - c_l}{2}, 2^{-k} + \frac{d_l-c_l}{2}) \big);\]
while the radii of our intervals are not quite dyadic, this introduces only minor notational changes by construction of $E$.

We apply \eqref{e:split} to majorize
\begin{align}
    \mathcal{V}^r(  \Xi_k f :k) \lesssim \big( \sum_{n \leq N} \mathcal{V}^r( \Xi_k f : k \in I_n)^2 \big)^{1/2} + \mathcal{V}^2(\Xi_{b_n} f : n \leq N),
\end{align}
and bound
\[ \| \mathcal{V}^2(\Xi_{b_n}f : n \leq N) \|_{L^2(\mathbb{R})} \lesssim \log N \|f \|_{L^2(\mathbb{R})}\]
by \eqref{e:V2}, so we focus on the first term. But, we have the equality
\begin{align}
\mathcal{V}^r( \Xi_k f : k \in I_n) \equiv \mathcal{V}^r( \Xi_k ( \Xi_{a_n} f - \Xi_{b_n} f ) : k \in I_n) 
\end{align}
so integrating yields
\begin{align}
    \| \big( \sum_{n \leq N} \mathcal{V}^r( \Xi_k f : k \in I_n)^2 \big)^{1/2} \|_{L^2(\mathbb{R})}^2 &\lesssim (\frac{r}{r-2})^4 \log^4 N \cdot \sum_n \| \Xi_{a_n} f - \Xi_{b_n} f \|_{L^2(\mathbb{R})}^2 \\
    & \qquad \lesssim (\frac{r}{r-2})^4 \log^4 N \| f \|_{L^2(\mathbb{R})}^2.
\end{align}
Above, we used a re-scaling of the estimate
\[ \| \mathcal{V}^r(\Xi_k g : k \geq 1 ) \|_{L^2(\mathbb{R})} \lesssim (\frac{r}{r-2})^2 \log^2 N \| g \|_{L^2(\mathbb{R})}.\]
\end{proof}

So, in what follows, our work on Theorem \ref{t:L2} will concern only the smooth formulation, namely
\[ \mathcal{V}^r(\Phi_k f).\]
We begin by adapting a metric chaining argument of \cite{Bourbaki}, which we now describe; below, we set $ 
    \phi_k := \varphi(2^k \cdot)^{\vee}.$

\medskip

For each $x \in \mathbb{R}$, we set
\begin{align}\label{e-ftheta} X(x) := \{ \phi_k*\vec{f_\Theta}(x) : k \} := \{ \big( \phi_k*f_{\theta_1}(x),\dots,\phi_k*f_{\theta_N}(x) \big) : k \} \end{align}
and set
\begin{align*}
\vec{N}_\lambda(x) := \vec{N}_{\lambda}(\vec{f}_{\Theta})(x),
\end{align*}
see \eqref{e:vecJump}. For each $v$ so that 
\[ 2^{-v} \leq   \text{diam}(X(x)) \leq 2 \mathcal{F}_{\Theta}(x) := 2\mathcal{F}_{\Theta;K_0}(\vec{f}_{\Theta})(x),
\]
see \eqref{e:vecMax}, define $\Lambda_v(x) \subset [K_0,K_1]$ to be a collection of times $t$ so that
\begin{align}
X(x) \subset \bigcup_{t \in \Lambda_v(x)} B\big( \phi_t*\vec{f_\Theta}(x), 2^{-v} \big),
\end{align}
subject to the constraint that $|\Lambda_v(x)|$ is minimal; the cardinality is essentially the $2^{-v}$-\emph{entropy} of the set, and note that for $v$ in the proscribed range, we may bound
\[ |\Lambda_v(x)| \leq 2 \vec{N}_{2^{-v}}(x).\]
Above,
\[ B(\phi_k*\vec{f_\Theta}(x),2^{-v}) := \{ (b_{\theta_1},\dots,b_{\theta_N}) : \| b_{\theta_n} - \phi_k*f_{\theta_n}(x) \|_{\ell^2(\Theta)} \leq 2^{-v} \} \]
are balls with respect to the $\ell^2(\Theta)$-norm.

For each $t \in \Lambda_v(x)$, define the \emph{parent} of $t$, $\varrho(t)\in \Lambda_{v-1}(x)$ to be the minimal element so that 
\begin{align}\label{e-int}
B\big( \phi_t*\vec{f_\Theta}(x), 2^{-v} \big) \cap B \big( \phi_{\varrho(t)}*\vec{f_\Theta}(x),2^{1-v} \big) \neq \emptyset.
\end{align}

With this construction in hand, we begin with localized $L^2(\mathbb{R})$-estimates, namely the following lemma. For the remainder of this section, we set
\begin{align}
\widehat{f_\theta}(\beta) := \chi(\beta) \hat{f}(\beta + \theta)
\end{align}
where $\mathbf{1}_{[-1/4,1/4]} \leq \chi \leq \mathbf{1}_{[-1/2,1/2]}$ is smooth.

\begin{lemma}\label{l:localized}
    Whenever $|I| = 10$, for any $\frac{2}{r} < u < 1$, we may bound 
    \begin{align}\label{e:localized}
    \| \mathcal{V}^r(\Phi_k f) \|_{L^2(I)} &\lesssim \frac{1}{1-u} \log N \min_{x_I \in I} \, \mathcal{V}_{\Theta}^s(x_I) + N^{-10} \min_{x_I \in I} \, \mathcal{F}_\Theta(x_I)^{1-u} \cdot \mathcal{V}_{\Theta}^{ru}(x_I)^u \\
    & \qquad \qquad \qquad + N^{-10} \min_{x_I \in I} \, (\sum_\theta M_{\text{HL}} f_\theta^2)^{1/2}(x_I).
\end{align}
where $\mathcal{V}_\Theta^r := \mathcal{V}^r_{\Theta}(\vec{f_\Theta})$, see \eqref{e:vecVr}, and $2 < s < r$ can be chosen to satisfy $\frac{1}{s-2} \lesssim \frac{\log N}{r-2}.$
\end{lemma}
Aside from its apparent use for developing our $L^2(\mathbb{R})$ theory, this Lemma \ref{l:localized} is useful for proving low-$L^p(\mathbb{R})$ estimates, as for any $1 < p \leq 2$ we may bound
\begin{align}
\| \mathcal{V}^r(\Phi_k f)\|_{L^p(\mathbb{R})}^p &= \sum_{|I| = 10} \| \mathcal{V}^r(\Phi_k f)\|_{L^p(I)}^p \\
&  \lesssim (\frac{1}{1-u})^p \log^p N \sum_{|I| = 10} \min_{x_I \in I} \, \mathcal{V}_{\Theta}^s(x_I)^p + N^{-10} \sum_{|I| = 10} \min_{x_I \in I}  \, \mathcal{F}_\Theta(x_I)^{p(1-u)} \cdot \mathcal{V}_{\Theta}^{ru}(x_I)^{pu} \\
&  \qquad \qquad + N^{-10} \sum_{|I| = 10} \min_{x_I \in I}  \,  (\sum_\theta M_{\text{HL}} f_\theta^2)^{1/2}(x_I)^p \\
&  \qquad \lesssim (\frac{1}{1-u})^p \log^p N \| \mathcal{V}_{\Theta}^s \|_{L^p(\mathbb{R})}^p + N^{-10} \| \mathcal{F}_{\Theta}^{(1-u)} (\mathcal{V}_{\Theta}^{ru})^u \|_{L^p(\mathbb{R})}^p \\
& \qquad \qquad \qquad + N^{-10} \| (\sum_\theta M_{\text{HL}} f_\theta^2) \|_{L^p(\mathbb{R})}^p;
\end{align}
taking $p$th roots, we bound
\begin{align}\label{e:usethis}
\| \mathcal{V}^r(\Phi_k f)\|_{L^p(\mathbb{R})} &\lesssim (1-u)^{-1} \frac{r}{r-2} \log^2 N \| (\sum_\theta |f_\theta|^2)^{1/2} \|_{L^p(\mathbb{R})} + N^{-10} \| \mathcal{F}_{\Theta}^{1-u} \|_{p/1-u} \| (\mathcal{V}_{\Theta}^{ru})^u \|_{p/u} \\
& \qquad \lesssim (1-u)^{-1} \frac{r}{r-2} \log^2 N \| (\sum_\theta |f_\theta|^2)^{1/2} \|_{L^p(\mathbb{R})} + N^{-10} \| \mathcal{F}_{\Theta} \|_p^{1-u} \| \mathcal{V}_{\Theta}^{ru} \|_p^u \\
& \qquad \qquad \lesssim \big( (1 - u)^{-1} \frac{r}{r-2} \log^2 N + N^{-10} \frac{r}{ru-2} \big) \| (\sum_\theta |f_\theta|^2)^{1/2} \|_{L^p(\mathbb{R})} \\
& \qquad \qquad \qquad \lesssim (\frac{r}{r-2})^2 \log^2 N \| (\sum_\theta |f_\theta|^2)^{1/2} \|_{L^p(\mathbb{R})},
\end{align}
after optimizing in $\frac{2}{r} < u < 1$.

This immediately implies Theorem \ref{t:L2} at the $L^2(\mathbb{R})$ level, and for $1 < p < 2$, the result follows since we may bound
\begin{align}
    \| (\sum_\theta |f_\theta|^2)^{1/2} \|_{L^p(\mathbb{R})}  \lesssim_p N^{1/p-1/2 + \epsilon} \|f \|_{L^p(\mathbb{R})}
\end{align}
by randomization and \cite{CRS}.

The high $L^p(\mathbb{R})$ theory, $2 < p < \infty$, is easier, as we may bound
\begin{align}
    \mathcal{V}^r(\Phi_k f) \leq N^{1/2} (\sum_\theta \mathcal{V}^r(f_\theta)^2)^{1/2}
\end{align}
where 
\[ \mathcal{V}^r(g) := \sup \big( \sum_i |\phi_{k_i}*g - \phi_{k_{i-1}}*g|^r\big)^{1/r} \]
with the supremum running over all finite increasing subsequences. By duality, we estimate
\begin{align}
    \| \mathcal{V}^r(\Phi_k f) \|_{L^p(\mathbb{R})}^2 \leq N \| \sum_{\theta} \mathcal{V}^r(f_\theta)^2 \|_{L^{p/2}(\mathbb{R})} = N \sum_\theta \int \mathcal{V}^r(f_\theta)^2 \cdot w  
\end{align}
for some non-negative $w \geq 0$ with $\| w \|_{L^{(p/2)'}(\mathbb{R})} = 1$. If we choose $1 < t < (p/2)'$ sufficiently close to $1$, then 
\[ \| w_t \|_{L^{(p/2)'}(\mathbb{R})} := \| (M_{\text{HL}} w^t)^{1/t} \|_{L^{(p/2)'}(\mathbb{R})} \lesssim_{t,p} 1, \]
and $w_t \in A_1(\mathbb{R})$ is an $A_1$ weight, see \cite{DU}. By \cite{KZK}, we may therefore bound
\[ \int \mathcal{V}^r(f_\theta)^2 \cdot w \leq \int \mathcal{V}^r(f_\theta)^2 \cdot w_t \lesssim (\frac{r}{r-2})^2 \int |f_\theta|^2 \cdot w_t;\]
putting everything together, we bound
\begin{align}
\| \mathcal{V}^r(\Phi_k f) \|_{L^p(\mathbb{R})}^2 &\lesssim (\frac{r}{r-2})^2 N \int \sum_\theta |f_\theta|^2 \cdot w_t \lesssim (\frac{r}{r-2})^2 N \| \sum_\theta |f_\theta|^2 \|_{L^{p/2}(\mathbb{R})} \| w_t \|_{L^{(p/2)'}(\mathbb{R})} \\
& \qquad \lesssim (\frac{r}{r-2})^2 N \| (\sum_\theta |f_\theta|^2 )^{1/2} \|_{L^p(\mathbb{R})}^2 \lesssim (\frac{r}{r-2})^2 N \| f \|_{L^p(\mathbb{R})}^2,
\end{align}
with the final inequality being a consequence of Rubio de Francia's square function estimates \cite{RdF}; the result is concluded by interpolation.

Therefore, Theorem \ref{t:L2} will be established once we have proven Lemma \ref{l:localized}.
\begin{proof}[Proof of Lemma \ref{l:localized}]
Let $I$ be an arbitrary interval of length $10$; by translation invariance we may assume that $I = [0,10]$.
Since we have restricted to scales $k \geq K_0$, whenever $0 \leq x,y,z \leq  10$, we may bound
\begin{align}
    \sum_{\theta} e(\theta x) \phi_k*f_\theta(x) =     \sum_{\theta} e(\theta x) \phi_k*f_\theta(y) + N^{-10} 2^{-k/10} \cdot \big( \sum_{\theta} M_{\text{HL}} f_\theta(z)^2 \big)^{1/2}
\end{align}
using the smoothness of $\phi$. So, for any $0 \leq y,z \leq 10 $
\begin{align}
\mathcal{V}^r( \Phi_k f)(x) \leq \mathcal{V}^r\big( \sum_\theta e(\theta x) \phi_k*f_\theta(y) : k ) + N^{-10} \cdot \big( \sum_{\theta} M_{\text{HL}} f_\theta(z)^2 \big)^{1/2}.   
\end{align}
With $y$ to be determined below, we apply the metric chaining mechanism to the set $X(y)$, setting
\[ \psi_t := \phi_t - \phi_{\varrho(t)},\]
to bound
\begin{align}
    &\mathcal{V}^r\big( \sum_\theta e(\theta x) \phi_k*f_\theta(y) : k ) \leq \sum_{2^{-v} \leq 2 \mathcal{F}_{\Theta}(y)} \mathcal{V}^r\big( \sum_{\theta} e(\theta x) \psi_t*f_\theta(y) : t \in \Lambda_v(y) \big) \\
    & \qquad \qquad + N^{-10} \cdot \big( \sum_{\theta} M_{\text{HL}} f_\theta(z)^2 \big)^{1/2} \\
    & \qquad \lesssim \sum_{2^{-v} \leq 4 \mathcal{F}_{\Theta}(y)} \big( \sum_{t \in \Lambda_v(y)} \big| \sum_{\theta} e(\theta x) \psi_t*f_\theta(y)|^r \big)^{1/r}   +N^{-10} \cdot \big( \sum_{\theta} M_{\text{HL}} f_\theta(z)^2 \big)^{1/2}.  
\end{align}
For each $v$, we bound
\begin{align}
    \| (\sum_{t \in \Lambda_v(y)} \big| \sum_{\theta} e(\theta x) \psi_t*f_\theta(y) \big|^r)^{1/r} \|_{L^2_x([0,10])} &\lesssim 2^{-v} \min\{ N^{1/2} |\Lambda_v(y)|^{1/r}, |\Lambda_v(y)|^{1/2} \} \\
    & \qquad \lesssim 2^{-v} \min\{ N^{1/2}\vec{N}_{2^{-v}}^{1/r}(y), \vec{N}_{2^{-v}}^{1/2}(y)\}:
\end{align} 
the first inequality is just a pointwise estimate, which follows from applying Cauchy Schwartz in the inner sum in $\theta$; for the second, we bound
\begin{align}
    \| (\sum_{t \in \Lambda_v(y)} \big| \sum_{\theta} e(\theta x) \psi_t*f_\theta(y) \big|^r)^{1/r} \|_{L^2([0,10])} &\leq \| (\sum_{t \in \Lambda_v(y)} \big| \sum_{\theta} e(\theta x) \psi_t*f_\theta(y) \big|^2)^{1/2} \|_{L^2([0,10])} \\
    & \qquad \lesssim 2^{-v} |\Lambda_v(y)|^{1/2} \lesssim 2^{-v} \vec{N}_{2^{-v}}^{1/2}(y),
\end{align}
using the fact that $\{ \theta \}$ are $1$ separated, and the elementary inequality
\begin{align}
    \| \sum_\theta e(\theta x) a_\theta \|_{L^2([0,10])} \lesssim \| \sum_\theta e(\theta x) a_\theta w(x) \|_{L^2(\mathbb{R})} \lesssim (\sum_\theta |a_\theta|^2)^{1/2}
\end{align}
where $\mathbf{1}_{[0,10]} \leq w \leq \mathbf{1}_{[-10,20]}$ has a Fourier transform supported inside $(-1/2,1/2)$.

In total, for any $0 \leq y \leq 10$, with $s$ as above and $A = \frac{100}{1-u}$, we have bounded
\begin{align}
    &\sum_{2^{-v} \leq 2 \mathcal{F}_{\Theta}(y)} \| \mathcal{V}^r\big( \sum_{\theta} e(\theta x) \psi_t*f_\theta(y) : t \in \Lambda_v(y) \big) \|_{L^2_x([0,10])} \leq \sum_{2^{-v} \leq 2 \mathcal{F}_{\Theta}(y)} 2^{-v} \min\{ N^{1/2} \vec{N}_{2^{-v}}^{1/r}(y),\vec{N}_{2^{-v}}^{1/2}(y) \} \\
    & \qquad \leq \sum_{\mathcal{F}_{\Theta}(y)/N^A \leq 2^{-v} \leq 2\mathcal{F}_{\Theta}(y)} 2^{-v} \vec{N}_{2^{-v}}^{1/s}(y) + N^{1/2} \sum_{2^{-v} \leq \mathcal{F}_{\Theta}(y)/N^A} 2^{-v(1-u)} \big(2^{-v}  \vec{N}_{2^{-v}}^{1/ru}(y) \big)^{u} \\
    & \qquad \qquad \lesssim \frac{1}{1-u} \cdot \log N \cdot \mathcal{V}_{\Theta}^s(y) + N^{-10} \cdot \mathcal{F}_{\Theta}(y)^{(1-u)} \cdot \mathcal{V}_{\Theta}^{ru}(y)^u, 
\end{align}
completing the proof.
\end{proof}

We now address endpoint estimates.

\section{The Proof of Theorem \ref{t:L1}}
In this section, we apply Nazarov-Oberlin-Thiele's Multi-frequency Calder\'{o}n-Zygmund decomposition \cite{NOT}, to establish Theorem \ref{t:L1}, restated for convenience below. 
\begin{proposition}\label{p:1scale}
    For each $r > 2, \ \epsilon >0$,
    \[ \| \mathcal{V}^r(\Phi_k f) \|_{L^{1,\infty}(\mathbb{R})} \lesssim_\epsilon \big( (\frac{r}{r-2})^4 \log^4 N \cdot N^{1/2} + N^{1/2+\epsilon} \big) \cdot \| f \|_{L^1(\mathbb{R})}. \]
\end{proposition}

The proof goes by way of a clever decomposition lemma of \cite{NOT}.
\begin{lemma}[Multi-Frequency Calder\'{o}n-Zygmund Decomposition]\label{l:mfcz}
Let $\Theta := \{ \xi_n : n \leq N \} \subset \mathbb{R}$ be an arbitrary but fixed collection of frequencies. Then, for any $f \in L^1$, there exists a decomposition
\[ f = g + \sum_I b_I\]
   where $\{ I \}$ are a disjoint collection of intervals with
   \[ \sum_I |I| \lesssim N^{1/2} \cdot \| f \|_{L^1(\mathbb{R})},\] so that the decomposition satisfies the following properties:
   \begin{itemize}
       \item $\|f \cdot \mathbf{1}_I \|_{L^1(\mathbb{R})} =: \| f_I \|_{L^1(\mathbb{R})}  \lesssim |I|/N^{1/2} $;
       \item $\| g_I \|_{L^2(\mathbb{R})} =: \| f_I - b_I \|_{L^2(\mathbb{R})} \lesssim |I|^{1/2}$;
       \item $\| g \|_{L^2(\mathbb{R})}^2 \lesssim N^{1/2} \cdot \| f \|_{L^1(\mathbb{R})}$;
       \item $b_I$ is supported in $3I$ and $\| b_I \|_{L^1(\mathbb{R})} \lesssim |I|$; and
       \item For each $I$, $\int b_I(x) e(-\xi x) \ dx = 0$ for all $\xi \in \Theta$.
   \end{itemize}
\end{lemma}

We introduce two smooth functions, $\rho$, which satisfies $\mathbf{1}_{[-5,5]} \leq \rho \leq \mathbf{1}_{[-10,10]}$, and $\eta$, with integral one and compact support inside $(-1/10,1/10)$. The key feature we will use is that
\[ \rho_0 \cdot \varphi := (\eta* \rho) \cdot \varphi \equiv \varphi.\]

We sparsify our collection of frequencies, $\Theta$, into (say) $100$ subcollections, so that
\[ \{ \rho_0(2^j(\beta - \theta)) : \theta \}  \]
are disjointly supported, for each $j \geq 1$.
We use windowed Fourier series to arrive at a more convenient representation
\[ \Phi_j f = \sum_{l \in \mathbb{Z}} \Pi_{j,l} f \]
where
\[ \Pi_{j,l} f(x) := \sum_{\theta}
e(\theta x) \rho_0^{\vee}(2^{-j} x -l) \Big( \int f(s) e(-\theta s) 2^{-j} \varphi^{\vee}(2^{-j} s-l) \ ds \Big).\]

\begin{proof}[Proof of Proposition \ref{p:1scale}]
We apply the multi-frequency Calder\'{o}n-Zygmund decomposition with respect to the frequencies $\Theta,$
and set 
\begin{align}\label{e:set} E := \bigcup_I N^\epsilon I. \end{align}
By homogeneity, it suffices to estimate
\[ |\{ \mathcal{V}^r(\Phi_k f)| \gg 1\}| \lesssim_\epsilon \big( (\frac{r}{r-2})^4 \log^4 N \cdot N^{1/2} + N^{1/2+\epsilon} \big) \cdot \| f \|_{L^1(\mathbb{R})}. \]
We estimate
\begin{align}
    |\{ \mathcal{V}^r(\Phi_k f) \gg 1\}| &\leq |\{ \mathcal{V}^r(\Phi_k g) \gg 1\}| + |E| + \sum_I  \sum_j \| \Phi_j b_I \|_{L^1(I^*)} \\
    & \qquad \lesssim (\frac{r}{r-2})^4 \log^4 N \cdot N^{1/2} \|f \|_{L^1(\mathbb{R})} + N^{1/2+\epsilon} \|f \|_{L^1(\mathbb{R})} + \sum_I \sum_j \| \Phi_j b_I \|_{L^1(I^*)},
\end{align}
where $I^* := (N^\epsilon I)^c$. We will show that 
    \begin{align}\label{e:triangle}\sum_j \| \Pi_j b_I \|_{L^1(I^*)} \lesssim |I|.\end{align}
When $2^j \leq |I|$, we use the Schwartz decay of our bump functions to estimate the small-scale contribution
\begin{align}
    \sum_{2^j \leq |I|} \| \Pi_j b_I \|_{L^1(I^*)} &\lesssim_M \sum_{2^j \leq |I| } N \| b_I \|_{L^1(\mathbb{R})} \sup_{y \in 3I}\| 2^{-j} (1 + 2^{-j} |x-y|)^{-M} \|_{L^1(I^*)} \\
    & \qquad \lesssim |I|.
\end{align}

For the large scales, we pull out the sum in $l$, and estimate
\begin{align}
\| \Pi_{j,l} b_I \|_{L^1(\mathbb{R})} \lesssim (1  + |l|)^{-2} 2^{-j} |I|.
\end{align}

We bound
\begin{align}
    \| \Pi_{j,l} b_I \|_1 &\lesssim 2^{j/2} \| \sum_{\theta} e(\theta x) \rho^{\vee}(2^{-j} x -l) \big( \int b_I(s) e(-\theta s) 2^{-j} \varphi^{\vee}(2^{-j} s-l) \ ds \big) \|_{L^2(\mathbb{R})} \\
    & \qquad \lesssim 2^{j/2} \| \big( \sum_{\theta} \big| \int b_I(s) e(-\theta s) 2^{-j} \varphi^{\vee}(2^{-j} s-l) \ ds \big|^2 |\rho^{\vee}(2^{-j} x -l)|^2 \big)^{1/2} \|_{L^2(\mathbb{R})} \\
    & \qquad \qquad \lesssim 2^j \big(\sum_{\theta} \big| \int b_I(s) e(-\theta s) 2^{-j} \varphi^{\vee}(2^{-j} s-l) \ ds \big|^2 \big)^{1/2},
\end{align}
where we used orthogonality in passing to the square function formulation in the second line.

Thus, it suffices to prove that 
\begin{align}
 \sum_{\theta} \big| \int b_I(s) e(-\theta s) 2^{-j} \varphi^{\vee}(2^{-j} s-l) \ ds \big|^2 \lesssim (1 + |l|)^{-10} \cdot 2^{-4j} |I|^2.
\end{align}

Since each $b_I$ is orthogonal to the frequencies in $\Theta$, we can express
\begin{align}
     &\int b_I(s) e(-\theta s) 2^{-j} \varphi^{\vee}(2^{-j} s-l) \ ds \\
     & \qquad \equiv 
\int b_I(s) e(-\theta s) 2^{-j} \big( \varphi^{\vee}(2^{-j} s-l) - \varphi^{\vee}(-l) \big) \ ds  =: \int b_I(s) e(-\theta s) \psi_{j,l}(s) \ ds
\end{align}
    where $\psi_{j,l}$ satisfies the differential inequalities
    \begin{align}
2^{2j} \big( \|\psi_{j,l}\|_{L^\infty(\mathbb{R})} + \|\partial \psi_{j,l}\|_{L^\infty(\mathbb{R})} \big) + 2^{3j} \| \partial^2 \psi_{j,l} \|_{L^\infty(\mathbb{R})} \lesssim (1 + |l|)^{-10}
    \end{align}
using the Mean-Value Theorem and the compact support of $\varphi$.

So, our task is to prove
\begin{align}
    \sum_{\theta} \big| \int b_I(s) \psi_{j,l}(s) e(-\theta s) \ ds \big|^2 \lesssim (1 + |l|)^{-10} \cdot 2^{-4j} |I|^2.
\end{align}
If we replace $b_I$ with $f_I$ in the above, then we may use fairly crude estimates: 
\begin{align}
\Big| \int f_I(s) \psi_{j,l}(s) e(-\theta s) \ ds \Big| \leq (1 + |l|)^{-10} \cdot 2^{-2j} \cdot \| f_I \|_{L^1(\mathbb{R})} \lesssim (1 + |l|)^{-10} \cdot 2^{-2j} N^{-1/2} |I|.
\end{align}

It remains to address
\[ \sum_{\theta} \big| \int g_I(s) \psi_{j,l}(s) e(-\theta s) \ ds \big|^2; \]
    we use duality. Specifically, for an appropriate sequence $\{ a_{\theta} \}$ with $\sum_{\theta} |a_{\theta}|^2 \leq 1,$
we bound
\begin{align}
    \sum_{\theta} \Big| \int g_I(s) \psi_{j,l}(s) e(-\theta s) \ ds \Big|^2 &\leq \Big| \int g_I(s) \big( \sum_{\theta} a_{\theta} \psi_{j,l}(s) e(-\theta s) \big) \ ds \Big|^2 \\
    & \qquad \lesssim \| g_I \|_{L^2(\mathbb{R})}^2 \| \sum_{\theta} a_{\theta} \psi_{j,l}(s) e(-\theta s) \chi_I(s) \|_{L^2(\mathbb{R})}^2 \\
    & \qquad \qquad \lesssim |I| \| \sum_{\theta} a_{J(\omega)} \psi_{j,l}(s) e(-\theta s) \chi_I(s) \|_{L^2(\mathbb{R})}^2
\end{align}
where $\mathbf{1}_{10I } \leq \chi_I \leq \mathbf{1}_{20 I}$
is smooth, and in particular satisfies
\[ g_I \chi_I \equiv g_I.\]
    By integration by parts twice, we may bound
\begin{align}
    \int \psi_{j,l}(s) \overline{\psi_{j,l}}(s) e(-(\theta - \theta' )s) |\chi_I(s)|^2 \ ds \lesssim (1 + |l|)^{-10} \cdot 2^{-4j}  (1 + |\theta - \theta'|)^{-2} |I|,
\end{align}
and since $|\theta - \theta'| \geq 100$ (by our initial sparsification), the result follows from Schur's test:

\begin{align}
&\| \sum_{\theta} a_{\theta} \psi_{j,l}(s) e(-\theta s) \chi_I(s) \|_{L^2(\mathbb{R})}^2 \lesssim (1 + |l|)^{-10} \cdot 2^{-4j} |I| \sum_{\theta,  \theta'} |a_{\theta}| |a_{\theta'}| (1 + |\theta - \theta'|)^{-2} \\
& \qquad \lesssim (1 + |l|)^{-10} \cdot 2^{-4j} |I| .
    \end{align}
\end{proof}

With our analytic rsults in hand, we now our attention to Theorem \ref{t:main0}, namely the issue of pointwise convergence of ergodic averages.

\section{Pointwise Convergence of Fractional Parts of Real-Variable Polynomials}

In this section we prove our main result in pointwise ergodic theory, with $L^{\infty}(X)$-result established by Bourgain in \cite[\S 8]{B2}. Below, we free the parameter $N$ to denote a time, $N \in \mathbb{N}$; typically we will restrict to a member of a lacunary subsequence. We set
\begin{align}\label{e:pd}
p_0 := \begin{cases} 1 & \text{ if $P$ is linear } \\
\frac{4}{3} & \text{ otherwise}.\end{cases}
\end{align}
We recall our result:
\begin{theorem}\label{t:ptwise0}
    Let $P(t) \in \mathbb{R}[\cdot]$ be a real-variable polynomial, and let $(X,\mu,T)$ be a $\sigma$-finite measure-preserving system. Then
    \[ \frac{1}{N} \sum_{n \leq N} T^{\lfloor P(n) \rfloor} f\]
    converges $\mu$-a.e.\ for all $f \in L^p(X), \ p_0 < p \leq \infty$. 
\end{theorem}
By Bourgain's polynomial ergodic theorem \cite{B2}, it suffices to consider the case where
\[ P(t) = \sum_{0 \leq j \leq d} b_j t^j =: b_0 + Q(t) \]
contains at least one irrational non-constant coefficient; henceforth, we will fix such a $P$, and allow all estimates to depend on $\{ b_j \}$, so in particular for the remainder of the paper we reserve $d := \text{deg}(P)$; and, we will \emph{not} assume that $b_0 = 0$.

The proof of Theorem \ref{t:ptwise0} follows a similar stategy to Bourgain's work on polynomial ergodic averages \cite{B2}, with key differences arising at the level of structure of the major arcs: because of the irrationality of the coefficients of $P$, the arithmetic techniques pioneered by Bourgain \cite{B2} and later Ionescu-Wainger \cite{IW} used to address polynomial ergodic averages do not apply. We instead rely on coarser analytic statistics of complete exponential sums, namely Lemma \ref{l:gauss} below.

Our proof goes by way of several reductions. We begin by recalling a standard Whitney decomposition to replace the rough cut-off, $\mathbf{1}_{[0,1)}$ below:
\begin{align} \frac{1}{N} \sum_{n \leq N} T^{\lfloor P(n) \rfloor} f \equiv \sum_k \frac{1}{N} \sum_{n \leq N} \mathbf{1}_{[0,1)}(P(n) - k) T^k f. \end{align}

We summarize the construction:
\begin{lemma}[Whitney Decomposition]
    There exist a collection of dyadic intervals $\{ J \}$ so that
    \begin{itemize}
        \item $100 J \subset [0,1)$;
        \item $\bigcup J = [0,1)$ is a disjoint union;
        \item The $20$-fold dilates of $\{ J \}$ have bounded overlap:
        \[ \sum \mathbf{1}_{20 J} \lesssim 1;\]
        \item There exists an absolute $C = O(1)$ so that 
        \[ |\{ J : |J| = 2^{-n} \}| \leq C.\]
    \end{itemize}
\end{lemma}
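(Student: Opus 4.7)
The plan is to build $\{J\}$ by a standard Whitney-type decomposition of $(0,1)$ adapted to the two boundary points $\{0,1\}$. Call a dyadic interval $J$ \emph{good} if $100 J \subset [0,1)$, and let $\{J\}$ consist of the \emph{maximal} good dyadic intervals, i.e., those good intervals whose dyadic parent is not good. The first bullet point is built into this definition.

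Disjointness follows immediately from the nesting property of dyadic intervals and maximality: two distinct dyadic intervals are either disjoint or nested, and in the nested case the smaller's parent is an ancestor of the larger; if both were maximal good, the smaller's parent would be good and strictly contain the smaller, a contradiction. For the covering property $\bigcup J = [0,1)$ (interpreted up to the single measure-zero point $\{0\}$), I would pick $x \in (0,1)$ and note that every dyadic interval containing $x$ of length less than $\min(x, 1-x)/100$ is good; passing to its largest good dyadic ancestor produces the element of $\{J\}$ containing $x$.

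For the bounded-overlap bullet, the key intermediate step is the Whitney-type distance estimate
\[ \text{dist}(J, \{0,1\}) \asymp |J| \qquad \text{for every } J \in \{J\}, \]
with the lower bound $\geq 49|J|$ coming from goodness of $J$, and the upper bound $\lesssim |J|$ coming from the failure of the parent $\tilde J$ to be good (so some point of $\{0,1\}$ lies within $50|\tilde J| = 100|J|$ of $J$). Once this is in hand, any two Whitney intervals with $20 J_1 \cap 20 J_2 \neq \emptyset$ must have comparable lengths and comparable distances to $\{0,1\}$, so only $O(1)$ such intervals can meet at any given point.

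Finally, the count $|\{J : |J| = 2^{-n}\}| \leq C$ is a direct consequence of the same dichotomy: at dyadic scale $2^{-n}$, the maximality condition confines the left endpoint $k 2^{-n}$ of such a $J$ to two bands of admissible indices near $k \approx 50$ and $k \approx 2^n - 50$, each of absolute constant cardinality. Nothing here is delicate; the only mildly fiddly point is carrying the constant $100$ through the good/parent-not-good bookkeeping to extract the distance estimate, and once that is isolated, the remaining verifications are routine dyadic combinatorics.
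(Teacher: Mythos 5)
The paper does not actually prove this lemma: it is stated (``we summarize the construction'') as a recollection of the standard Whitney decomposition of $(0,1)$ adapted to the two boundary points, and the reader is expected to supply it. Your argument is exactly that standard construction and is correct, so there is nothing to compare against. Two small remarks worth making explicit. First, as you note, the point $0$ cannot lie in any good $J$, so the covering assertion $\bigcup J = [0,1)$ in the lemma is to be read up to a set of measure zero (which is all that is used downstream, since the decomposition only enters through $\mathbf{1}_{[0,1)} = \sum \phi_J$ as a multiplier identity). Second, your disjointness argument tacitly uses the monotonicity of goodness: if $J \subsetneq J'$ are dyadic and $100J' \subset [0,1)$, then $100J \subset 100J' \subset [0,1)$ as well, because $c_J \in J'$ and $|J| \le |J'|/2$ force $100J$ into the interval of radius $25.5|J'| < 50|J'|$ about $c_{J'}$; this is what makes the smaller interval's parent good when it sits inside a larger maximal good interval. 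With that spelled out, the distance estimate $\mathrm{dist}(J,\{0,1\}) \asymp |J|$, the bounded-overlap count, and the $O(1)$-per-scale count all follow as you describe.
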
    
Associated to our dyadic Whitey decomposition, we define a family of smooth multipliers,
   \[ \{ \phi_{J} \} \]
   satisfying natural derivative conditions
   \begin{align}\label{e:deriv} \sup_{J, \ \alpha \leq 10} |J|^{\alpha} \cdot |\partial^\alpha  \phi_J| \lesssim 1,\end{align}
so that
\[ \mathbf{1}_{[0,1)} = \sum \phi_{J}\]
and \[\text{supp } \phi_{J} \subset J' \]
is an interval that's concentric with $J$ with dyadic side length, satisfying $|J'| = 4|J|$.

We decompose
\begin{align}
    &\sum_k \frac{1}{N} \sum_{n \leq N} \mathbf{1}_{[0,1)}(P(n) - k) T^k f \\
    & \qquad = \sum_J \big( \sum_k \frac{1}{N} \sum_{n \leq N} \phi_J(P(n) - k) T^k f \big).
\end{align}

At the $L^{\infty}(X)$ endpoint, we have a cheap estimate, which follows from Weyl's equidistribution criterion and the continuity of each $\phi_J$, using crucially the fact that $Q$ has at least one irrational coefficient:
\begin{align}
    \limsup |\sum_k \frac{1}{N} \sum_{n \leq N} \phi_J(P(n) - k) T^k f| \leq \| \phi_J \|_{L^1(\mathbb{T})} \| f \|_{L^{\infty}(X)} \lesssim |J| \| f \|_{L^{\infty}(X)} 
\end{align}
In particular
\begin{align}\label{e:Linf}
    \| \mathcal{V}^{\infty}\big( \sum_k \frac{1}{N} \sum_{n \leq N} \phi_J(P(n) - k) T^k f \big) \|_{L^{\infty}(X)} \lesssim |J| \| f \|_{L^{\infty}(X)} .
\end{align}

We will complement this by proving that for each $R \in \mathbb{N}$ and each $r > 2, \ p_0 < p < \infty$, 
\begin{align}
     \| \mathcal{V}^r\big( \sum_k \frac{1}{N} \sum_{n \leq N} \phi_J(P(n) - k) T^k f \big) \|_{L^p(X)} \lesssim_{p,r,R}  \| f \|_{L^p(X)},
\end{align}
where $I_R := \{ \lfloor 2^{k/R} \rfloor : k \geq 1 \}.$ By mixed norm interpolation, this will imply that for each $p_0 < p < \infty$, there exists $2 < r = r(p) < \infty$, \ $ \epsilon = \epsilon(p) > 0$ so that 
\begin{align}
    \| \mathcal{V}^r\big( \sum_k \frac{1}{N} \sum_{n \leq N} \phi_J(P(n) - k) T^k f : N \in I_R \big) \|_{L^p(X)} \lesssim_{p,R} |J|^{\epsilon} \| f \|_{L^p(X)};
\end{align}
therefore, for all $p_0 < p < \infty$, by the triangle inequality and the properties of the Whitney decomposition, we arrive at the bound
\begin{align}
    &\| \mathcal{V}^r\big( \sum_k \frac{1}{N} \sum_{n \leq N} \mathbf{1}_{[0,1)}(P(n) - k) T^k f : N \in I_R\big) \|_{L^p(X)} \\
    & \qquad \equiv \| \mathcal{V}^r\big(  \frac{1}{N} \sum_{n \leq N} T^{\lfloor P(n) \rfloor} f : N \in I_R \big) \|_{L^p(X)}
    \lesssim_{p,R} \| f\|_{L^p(X)}
\end{align}
for the same $r = r(p)$; in particular, this implies pointwise convergence for 
\begin{align}
    \frac{1}{N} \sum_{n \leq N} T^{\lfloor P(n) \rfloor} f 
\end{align}
along each sequence $I_R$, which implies Theorem \ref{t:ptwise0} by standard arguments, see e.g.\ \cite[Lemma 4.17]{BOOK}. One further reduction we will make is that we will truncate our sums to their ``upper-halves:" by convexity, arguing as in \cite{KMT}, we can truncate 
\[ \frac{1}{N} \sum_{n \leq N} \phi_J(P(n) - k) T^k f \longrightarrow \frac{2}{N} \sum_{N/2 < n \leq N} \phi_J(P(n) - k) T^k f, \]
so we will make this replacement without comment in what follows. 

By Calder\'{o}n's Transference principle \cite{C1}, it suffices to establish the following key proposition.

\begin{proposition}\label{p:l2}
For each $R \geq 1$, $r > 2$, $p_0 < p < \infty$, and each $J$, the following estimates holds:
    \begin{align}
       & \| \mathcal{V}^r\big( \frac{2}{N} \sum_{N/2< n \leq N} \sum_k \phi_J(P(n) - k) f(x-k) : N \in I_R\big) \|_{\ell^p(\mathbb{Z})} \lesssim_{p,r,R}  \| f \|_{\ell^p(\mathbb{Z})}.  
    \end{align}
\end{proposition}

Henceforth, for notational ease, we will suppress the subscript
\[ \phi_J \longrightarrow \phi.\]
We begin with a preliminary reduction, as per \cite[\S 8]{B2}, see also \cite[ \S 7]{BK2}; 
namely, we apply Poisson summation to express
\begin{align}
   & \sum_k \frac{2}{N} \sum_{N/2< n \leq N} \phi(P(n) - k) f(x-k) \\
   & \qquad = \sum_{\xi \in \mathbb{Z}} \int \big( \hat{f}(\beta) \hat{\phi}(\xi - \beta) \big) \cdot \big( \frac{2}{N} \sum_{N/2 < n \leq N} e((\xi-\beta) \cdot P(n) ) \big) \cdot e(\beta x) \ d\beta  \\
    & \qquad \qquad =: \sum_{\xi \in \mathbb{Z}} e(\xi b_0) \int \widehat{f_\xi}(\beta) \cdot  \big( \frac{2}{N} \sum_{N/2 < n \leq N} e((\xi - \beta) Q(n)) \big) \cdot e(\beta x) \ d\beta.
\end{align}
Since we may bound
\begin{align}
    \| f_\xi \|_{\ell^p(\mathbb{Z})} &\leq \| \hat{\phi}(\xi - \cdot) \|_{A(\mathbb{T})} \| f \|_{\ell^p(\mathbb{Z})} \lesssim |J| ( 1+ |J| |\xi|)^{-2} \| f \|_{\ell^p(\mathbb{Z})},
\end{align}
it suffices to freeze a single value of $\xi$; to establish this bound we use the regularity of $\phi$, see \eqref{e:deriv}, to estimate
\begin{align}
    \| \hat{\phi}(\xi - \cdot) \|_{A(\mathbb{T})} &\lesssim \| \hat{\phi}(\xi - \cdot) \|_{L^2(\mathbb{T})} + \| \hat{\phi}(\xi - \cdot) \|_{L^2(\mathbb{T})}^{1/2} \| \partial \hat{\phi}(\xi - \cdot) \|_{L^2(\mathbb{T})}^{1/2} \\
    & \qquad \lesssim |J| ( 1 + |J| |\xi|)^{-2}.
\end{align}

\subsection{The Proof of Proposition \ref{p:l2}}
We first reduce our task to establishing the following Proposition.
\begin{proposition}[First Reduction]\label{p:l2'}
   For each $R \geq 1, r > 2, p_0 < p < \infty$ and each $\xi \in \mathbb{Z}$, the following estimate holds uniformly:
    \begin{align}
        & \| \mathcal{V}^r \big(  \int \widehat{f}(\beta) \cdot m_N(\beta)\cdot e(\beta x) : N \in I_R \big) \|_{\ell^p(\mathbb{Z})} \lesssim_{p,r,R}  \| f \|_{\ell^p(\mathbb{Z})},  
    \end{align}
    where
    \[ m_N(\beta) = m_{N,\xi}(\beta) := \frac{2}{N} \sum_{N/2 < n \leq N} e((\xi - \beta) Q(n)). \]
\end{proposition}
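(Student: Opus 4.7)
The plan is to mirror the proof of Proposition \ref{p:l2} from the preceding subsection: apply Bourgain's major/minor arc decomposition $m_N = \sum_{2^s \leq N^{\delta_0}} L_{N,s} + \mathcal{E}_N$ and control each piece on $\ell^p(\mathbb{Z})$ with appropriate $s$-decay. The restriction $p > \frac{2d}{d+1}$ will emerge from the $s$-dependence of the $\ell^p$ bounds on the multi-frequency model operators, not from any essentially new Fourier decomposition.

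For the error $\mathcal{E}_N$, I would use the $L^\infty(\mathbb{T})$-decay $\|\mathcal{E}_N\|_\infty \lesssim N^{-\delta}$ together with the trivial kernel bound $\|\mathcal{E}_N^{\vee}\|_{\ell^1(\mathbb{Z})} = O(1)$, and Riesz-Thorin interpolate between the resulting $\ell^2$-decay and $\ell^\infty$-boundedness to obtain $\|(\mathcal{E}_N \hat{f})^{\vee}\|_{\ell^p} \lesssim N^{-\delta'} \|f\|_{\ell^p}$ for $1 < p < \infty$; since $I_1$ is geometric, dominating the supremum by the sum over $N \in I_1$ yields a uniformly bounded contribution.

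For the main term, I would approximate $L_{N,s} \approx \Pi_s \cdot \Phi'_{N,s}$ as in the $\ell^2$ argument, and bound $\sup_N |(\Pi_s \Phi'_{N,s} \hat{f})^{\vee}|$ on $\ell^p$ via a Magyar-Stein-Wainger-type sampling argument: the bandwidth $\ll 2^{-s}$ and the denominator bound $\leq 2^{s+1}$ on the frequencies in $\mathcal{R}_{s,\xi}$ allow the discrete multi-frequency maximal operator to be compared, up to constants, with its continuous analogue; after centering each bump at the origin, the continuous operator is dominated pointwise by $\sup_\theta |S(\theta)| \cdot M_{\text{HL}}$, and the Weyl-sum estimate $\sup_\theta |S(\theta)| \lesssim 2^{(\epsilon-1/d)s}$ provides the required geometric decay in $s$.

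The main obstacle will be the $\ell^p$ approximation error $L_{N,s} - \Pi_s \Phi'_{N,s}$: the $\ell^2$ argument of Proposition \ref{p:almost} controls this via a square-function / van der Corput bound, but for $\ell^p$ with $p < 2$ such a square-function strategy loses a power of the frequency count $|\mathcal{R}_{s,\xi}|^{2|1/p-1/2|}$. Combined with the Weyl decay, the total $s$-dependence takes the form $2^{(2|1/p-1/2| + \epsilon - 1/d)s}$, summable precisely when $|1/p - 1/2| < \frac{1}{2d}$, i.e., $p > \frac{2d}{d+1}$ — this is where the numerology of the claimed range is pinned down, and where careful use of the almost-disjoint frequency supports of $\Pi_s$, rather than a crude triangle inequality over $\mathcal{R}_{s,\xi}$, will be essential.
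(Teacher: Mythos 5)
Your overall architecture is right — major/minor arc decomposition, error term via interpolation with $\ell^2$ decay, and the numerology $K^{|1/p-1/2|+\epsilon}\cdot 2^{(\epsilon - 1/d)s}$ with $K \sim 2^{2s}$ does pin down $p > \frac{2d}{d+1}$ — but the mechanism you propose for the key maximal estimate on the major arcs is not viable, and it is precisely there that the paper's real work lives.

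You claim that after rescaling, the multi-frequency maximal operator is ``dominated pointwise by $\sup_\theta |S(\theta)| \cdot M_{\text{HL}}$.'' This cannot be correct: such a pointwise bound would give $\ell^p$ boundedness for \emph{all} $p > 1$ with no loss in the frequency count $K$, which is incompatible with your own (correct) diagnosis that the constraint $p > \frac{2d}{d+1}$ arises from a $K^{|1/p-1/2|}$-type loss. The maximal function over $N$ accumulates contributions from all $K$ frequencies, and controlling it costs a genuine square-function loss in $K$ — there is no pointwise domination by a single-weight maximal operator. What the paper actually does, following the Bourbaki exposition, is a Bourgain-style unit-scale argument: on each interval $I$ of length $10$, one has $\|\mathcal{M}_\Theta f\|_{L^p(I)} \lesssim \|\mathcal{M}_\Theta f\|_{L^2(I)} \lesssim \min_{x_I \in I} \mathcal{F}_\Theta(x_I) + \log K \cdot \min_{x_I} \mathcal{V}^{r}_\Theta(x_I) + K^{-10}\min_{x_I} M_{\text{HL}}f(x_I)$ with $r = 2 + \frac{1}{100\log K}$, where $\mathcal{F}_\Theta$ is a square function of single-frequency maximal operators (bounded via Fefferman--Stein and the Coifman--Rubio de Francia--Semmes square function estimate) and $\mathcal{V}^r_\Theta$ is a vector-valued $r$-variation (bounded via Krause--Zorin-Kranich, convexity, and CRS). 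The comparison is legitimate because both $\mathcal{F}_\Theta$ and $\mathcal{V}^r_\Theta$ are almost constant at unit scales up to a $K^{-10}M_{\text{HL}}f$ error. Summing $L^p$ norms over unit intervals then gives the result. This step — replacing the sup over $N$ by a square function plus a short-$r$ variation at a \emph{single point} of each unit interval — is the missing idea in your proposal.

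Two smaller points. First, your bound $\|\mathcal{E}_N^{\vee}\|_{\ell^1(\mathbb{Z})} = O(1)$ is not trivial and is likely false as stated: the kernel of $m_N$ on $\mathbb{Z}$ involves $\mathrm{sinc}$-type factors centered at the non-integer points $Q(n)$ and does not have uniformly bounded $\ell^1$ norm, and $\mathcal{E}_N = m_N - L_N$ inherits this. The paper instead bounds $\|(\mathcal{E}_N\hat f)^{\vee}\|_{\ell^p}$ for $\frac{2d}{d+1} < p \leq 2$ by the triangle inequality through the full average $(m_N\hat f)^\vee$ and the major arc pieces $(L_{N,s}\hat f)^\vee$ (each controlled by CRS and Magyar--Stein--Wainger transference), then interpolates with the $\ell^2$ decay. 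Second, the approximation $L_{N,s} \approx \Pi_s \Phi'_{N,s}$ you invoke is specific to the $\ell^2$ proof of Proposition \ref{p:almost}; the $\ell^p$ argument does not use it but proceeds directly via rescaling and MSW transference to the real-variable model. The ``obstacle'' you anticipate in your final paragraph therefore does not arise in the form you describe, although — credit where due — the Weyl-times-CRS numerology you write down there is essentially the one that governs the range.
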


We now apply the Fourier transform method, similar to the decomposition of \cite[\S 8]{B2}, to express
\begin{align}
    m_N(\beta) = L_N(\beta) +  \mathcal{E}_N(\beta) = \sum_{2^s \leq N^{\delta_0}} L_{N,s}(\beta) + \mathcal{E}_N(\beta), \; \; \; 0 < \delta_0 \leq 1/200
\end{align}
where $\|\mathcal{E}_N\|_{L^{\infty}(\mathbb{T})} \leq N^{-\delta'}$ uniformly in $\xi$, are error terms, and
\begin{align}\label{e:LNS}
L_{N,s}(\beta) := \sum_{\theta \in \mathcal{R}_{s,\xi}} S(\theta) w_N(\beta - \theta) \chi(2^{10sd}|b_d|(\beta - \theta)) \end{align}
is described below; for the details of this reduction see \cite[\S 5]{B2} or \cite[\S 4]{BOOK}.

We define the terms in order: first, $\mathbf{1}_{[-1/4,1/4]} \leq \chi \leq \mathbf{1}_{[-1/2,1/2]}$ is a smooth bump function, and $w_N$ is an analytic multiplier
\begin{align}
    w_N(\beta) :=  \widehat{\chi_d}(b_d N^d \beta) \chi(N^{d-\epsilon} \beta ), \; \; \; \chi_d(s) := \frac{2}{d s^{1-1/d}} \mathbf{1}_{(2^{-d},1)}(s),
\end{align}    
    which satisfies the bounds
\begin{align}\label{e:w} w_N(\beta) = \begin{cases} 1 + O(|b_d| N^d |\beta|) & \text{ if } |\beta| \lesssim |b_d|^{-1} N^{-d} \text{ is small} \\
O\big((1 + N^d |b_d| |\beta|)^{-1}\big) \cdot \mathbf{1}_{|\beta| \leq 10 N^{\epsilon - d}} & \text{ otherwise, } \end{cases} 
\end{align}
by standard van der Corput estimates, see e.g.\ \cite[Appendix B]{BOOK}. As for the number theoretic input, the sum runs over a subset of $\theta \in [0,1]$ satisfying 
\begin{align}\label{e:theta} \{ \theta \in [0,1] : b_j(\xi - \theta) \equiv a_j/q \mod 1, \ (a_1,\dots,a_d,q) = 1 \}, \end{align}
and we define the associated Gauss sum
\begin{align}
    S(\theta) := \frac{1}{q} \sum_{r \leq q} e(-a_1/q r - \dots - a_d/q r^d)
\end{align}
where $\theta, a_1,\dots,a_d,q$ are related as in \eqref{e:theta}. Note that the only way that \eqref{e:theta} is ever satisfied is if the coefficients of $Q$ are rationally dependent, namely
\[ Q(n) = \lambda \sum_{j=1}^d u_j n^j, \; \; \; \lambda \notin \mathbb{Q},\ u_1,\dots,u_d \in \mathbb{Z}, \ (u_1,\dots,u_d) = 1.\]
In particular, in this case we have
\[ \frac{a_j}{q} = \frac{u_j a_1}{q u_1},\]
so that whenever $\theta, \ (a_1,\dots,a_d,q)$ are related as in \eqref{e:theta}, we may express
\begin{align}
S(\theta) = \frac{1}{q} \sum_{r \leq q} e( - \frac{a_1}{q} ( r + \frac{u_2}{u_1} r^2 + \dots + \frac{u_d}{u_1} r^d)).
\end{align}

The set of frequencies $\mathcal{R}_{s,\xi} \subset \mathbb{T}$ depends on $\xi \in \mathbb{Z}$, and is defined below
\begin{align}\label{e:theta0} \{ \theta \in [0,1] : |S(\theta)| \approx 2^{-s} \};
\end{align} while these frequencies are not rational, they are fairly widely separated in that 
\[ \min_{\theta \neq \theta' \in \mathcal{R}_{s,\xi}} |\theta - \theta'| \gtrsim 5^{-ds} \]
with implicit constants depending on $Q$, as each $q$ associated to $\theta$ as in \eqref{e:theta} satisfies
\[ q \leq 2^{(d+\epsilon)s} \]
by \cite{HUA};
and, we may bound
\[ \sup_{\xi} |\mathcal{R}_{s,\xi}| \lesssim_\epsilon \min\{ 2^{(2d+\epsilon)s}, 2^{(4+\epsilon)s} \}\]
by the following lemma.
\begin{lemma}\label{l:gauss}
    Let $P(n) = \sum_{j\leq d} B_j n^j \in \mathbb{Z}{[\cdot]}$ be a polynomial with $(B_1,\dots,B_d) = 1$, and define
    \[ S(a/q) := \frac{1}{q} \sum_{r \leq q} e(-a/q P(r)).\]
Then
\[ |\{ a/q : (a,q) = 1, |S(a/q)| \approx 2^{-s} \} |\lesssim_{\epsilon,P} \min\{ 2^{(2d+\epsilon) s}, 2^{(4+\epsilon) s} \}.\]
\end{lemma}
\begin{proof}
    The case of $d \leq 2$ is Hua's Bound \cite{HUA}. The interesting case is where $d \geq 3$. Our departure point is \cite[Lemma 12.2-12.3]{IK}, which states that whenever $e \geq 2$,
\begin{align}
        \frac{1}{p^e} \sum_{r \leq p^e} e(-\frac{a}{p^e} P(r)) \lesssim_P p^{-\lfloor e/2 \rfloor}, \; \; \; (a,p) = 1,
    \end{align}
with a generic upper bound on the implicit constant on the order of $d$; the case where $e=1$ sees a square root savings is due to the Weil bound. Already, taking into account that each $q$ has at most $O( \frac{\log q}{\log \log q})$ distinct prime factors, we see that
\[ \{ a/q : (a,q) = 1, |S(a/q)| \approx 2^{-s} \} \subset \{ a/q : q \lesssim 2^{(3+\epsilon)s} \}. \]
    We improve this in what follows.

For each integer, $q \leq 2^{(3+\epsilon)s}$, we decompose
\[ q = q_1 \cdot q_2, \; \; \; (q_1,q_2) = 1\]
    where $q_2$ is cube-free, and define
    \[ t(q_1) := \prod_{p^e || q_1} p^{\lfloor e/2 \rfloor} \]
so that the map
\[ q_1 \mapsto t(q_1) \]
is at most $2^{\epsilon s}$-to-one whenever $q_1 \leq 2^{(3+\epsilon)s}$. And, we note that $t$ is a multiplicative function. If we set
\[ \alpha(q) := \max_{(a,q) = 1} |S(a/q)| \]
then, by the Chinese Remainder Theorem, we may bound
\[ \alpha(q_1 q_2) \lesssim_P t(q_1)^{\epsilon-1} q_2^{\epsilon-1/2}.  \]

Consequently, we estimate
\begin{align}
    &|\{ a/q : (a,q) = 1, \ |S(a/q)| \geq 2^{-s} \}| \leq \sum_{q_1 \leq 2^{(3+\epsilon)s}} \phi(q_1) \sum_{q_2 : \alpha(q_2) \geq t(q_1)/2^s} \phi(q_2) \\
        & \qquad \lesssim_\epsilon \sum_{q_1 \leq 2^{(3+\epsilon)s}} \phi(q_1) \cdot (\frac{2^s}{t(q_1)})^{4+\epsilon} \\
        & \qquad \qquad \leq 2^{(4+\epsilon)s} \prod_p \big(1 + \frac{p^3 - p^2}{p^{(4+\epsilon)}} + \frac{p^4 - p^3}{p^{2(4+\epsilon)}} + \dots + \frac{p^k - p^{k-1}}{p^{\lfloor k/2 \rfloor(4+\epsilon)}} + \dots \big) \\
        & \qquad \qquad \qquad \lesssim_\epsilon 2^{(4+\epsilon)s}
\end{align}
where we here set $\phi$ to be the Euler totient function, and the Euler product sees no small prime powers since all prime powers appearing in $q_1$ are $\geq 3$.   
\end{proof}

By Magyar-Stein-Wainger Transference \cite{MSW} and \cite{CRS}, we may bound
\begin{align}
\sup_N \| (L_{N,s} \hat{f})^{\vee} \|_{\ell^p(\mathbb{Z})} \lesssim 2^{-c_p s} \|  f\|_{\ell^p(\mathbb{Z})}
\end{align}
for each $p > p_0$, so for each such $p$
\begin{align}
\| \mathcal{E}_N^{\vee}* f \|_{\ell^p(\mathbb{Z})} \leq \| (m_N \hat{f})^{\vee} \|_{\ell^p(\mathbb{Z})} + \sum_{s} \| (L_{N,s} \hat{f})^{\vee} \|_{\ell^p(\mathbb{Z})} \lesssim \| f\|_{\ell^p(\mathbb{Z})}
\end{align}
while 
\[ \| \mathcal{E}_N^{\vee}*f \|_{\ell^2(\mathbb{Z})} \leq N^{-\delta'} \| f \|_{\ell^2(\mathbb{Z})}.\]
In particular
\begin{align}
\sum_{N \in I_R} \| \mathcal{E}_N f \|_{\ell^p(\mathbb{Z})} \lesssim_{p,R} \| f \|_{\ell^p(\mathbb{Z})}
\end{align}
for each $p > p_0$, so moving forward, it suffices to address the contribution from the multipliers $\{ L_{N,s} : 2^s \leq N^{\delta_0} \}$. Specifically, matters have reduced to the following proposition; for notational ease, we will henceforth restrict all times $N$ to be larger than $2^{\delta_0^{-1} s}$. Moving forward, since our multipliers are all compactly supported inside of $\mathbb{T}$, we will appeal to Magyar-Stein-Wainger transference \cite{MSW} and concern ourselves with real variable estimates. 

\begin{proposition}[Second Reduction]\label{p:almost}
For each $r > 2, p > p_0$, and $s \geq 1$, there exists $c_p > 0$ so that following estimate holds, independent of $\xi$:
\begin{align}\label{e:varL}
    \| \mathcal{V}^r\big( (L_{N,s} \hat{f})^{\vee} : N \in I_R \big) \|_{L^p(\mathbb{R})} \lesssim_{p,r,R} 2^{-c_p s} \| f\|_{L^p(\mathbb{R})}.
\end{align}
\end{proposition}
After rescaling, we are now in a position to apply Theorem \ref{t:L2}.
To adapt to the notation there, we set
\[ \widehat{f_\theta}(\beta) := S(\theta) \chi_0(2^{100sd} |b_d| \beta) \hat{f}(\beta+\theta)\]
where $\chi_0$ is like $\chi$ but is one on its support, and increase the density of the sequence of times $\Phi_k \longrightarrow \Phi_{k/R}$, which introduces only notational changes and constants that depend on $R$. The upshot is that
\begin{align}
\| \mathcal{V}^r\big( (L_{N,s} \hat{f})^{\vee} : N \in I_R \big) \|_{L^p(\mathbb{R})} &\lesssim_{p,R} (\frac{r}{r-2})^2 s^2 \| (\sum_\theta |f_\theta|^2)^{1/2} \|_{L^p(\mathbb{R})} \\
& \qquad \lesssim_{r,p,R} s^2 2^{-s} |\mathcal{R}_s|^{1/p -1/2 + \epsilon} \lesssim 2^{-c_p s} \| f \|_{L^p(\mathbb{R})},
\end{align}
whenever $p > p_0$.

\section{Appendix}
In this section we provide an alternative proof of Theorem \ref{t:main0}, due to M. Wierdl, that addresses functions in all $L^p(X)$ spaces, $1 < p  \leq \infty$, whenever $X$ is a Lesbesgue space; the argument derives from sophisticated arithmetic considerations, namely Ionescu-Wainger theory \cite{IW} or Bourgain's double logarithmic Lemma \cite{B2} used in \cite{MT}, and higher-dimensional analysis, \cite{MST}.

Let $(X,\mu,T)$ be such a measure preserving system, and let $f \in L^p(X), \ 1 < p < \infty$ be arbitrary. 

As above, we fix
\[ P(n) = \sum_j b_j n^j. \]

We first assume our transformation $T$ generates a flow,
\[ t \mapsto T^t; \]
if we then define
\[ U_j := T^{b_j},\]
so that the $\{ U_j \}$ commute, and 
\[ f(T^{P(n)} x) \equiv f(U_1^n \circ U_2^{n^2} \circ \dots \circ U_d^{n^d} x), \]
by \cite{MST}, for each $1 < p < \infty, \ r > 2$
\begin{align}
    \| \mathcal{V}^r( \frac{1}{N} \sum_{n \leq N} f(U_1^n \circ U_2^{n^2} \circ \dots \circ U_d^{n^d}) : N ) \|_{L^p(X)} \lesssim_p \frac{r}{r-2} \| f \|_{L^p(X)},
\end{align}
which implies that for each $f \in L^p(X)$
\[ \frac{1}{N} \sum_{n \leq N} f(T^{P(n)} x) \]
converges almost everywhere, so the result then follows from \cite[Theorem 1]{Bosh}, see also \cite{Les}.

It remains to reduce to the case of flows; we use suspensions. So, suppose that $(X,\mu,T)$ is an arbitrary Lesbesgue space, and consider the space
\[ Y := X \times [0,1) = \{ (x,s) : x \in X, \ s \in [0,1)\}.\]
On $Y$, equipped with the product measure $\nu := \mu\otimes dx$, define the flow
\[ \tilde{T}^t(x,s) := (T^{\lfloor s+t \rfloor} x, s+t \mod 1 )\]
Now, let $f \in L^p(X)$ for $1 < p \leq \infty$ be arbitrary but fixed; our job is to show that
\[ \frac{1}{N} \sum_{n \leq N} f(T^{\lfloor P(n) \rfloor} x) \]
converges $\mu$-almost surely. But, if we define
\[ \tilde{f}(x,s) := f(x) : Y \to \mathbb{C},\]
then we know that
\[ \frac{1}{N} \sum_{n \leq N} \tilde{f}(\tilde{T}^{\lfloor P(n) \rfloor}(x,s)) \]
converges $\nu$-almost surely, in $(x,s)$. In particular, by Fubini, for almost every $s \in [0,1)$
\[ \frac{1}{N} \sum_{n \leq N} \tilde{f}(\tilde{T}^{\lfloor P(n) \rfloor}(x,s)) \]
converges $\mu$-almost surely; suppose that $s_0$ is such an $s$, and let $X_0 \subset X$ be the corresponding full-$\mu$-measure subset so that for all $x \in X_0$
\[ \frac{1}{N} \sum_{n \leq N} \tilde{f}(\tilde{T}^{\lfloor P(n) \rfloor}(x,s_0)) \]
converges.

We observe that
\begin{align}
    \tilde{f}\big(\tilde{T}^{\lfloor P(n) \rfloor}(x,s_0)\big) &= \tilde{f}\big(T^{\lfloor s_0 + (\lfloor P(n) \rfloor) \rfloor} x, s_0 +\lfloor P(n) \rfloor \big) \\
    & \qquad = \tilde{f}\big(T^{\lfloor P(n) \rfloor} x, s_0 +\lfloor P(n) \rfloor \big) = f(T^{\lfloor P(n) \rfloor} x),
\end{align}
so for all $x \in X_0$,
\begin{align}
    \lim_N \frac{1}{N} \sum_{n \leq N} f(T^{\lfloor P(n) \rfloor}x) &= \lim_N \frac{1}{N} \sum_{n \leq N} \tilde{f}(\tilde{T}^{\lfloor P(n) \rfloor}(x,s_0))
\end{align}
exists.

\end{document}